\newtheorem{theorem}{Theorem}
\newtheorem{proposition}[theorem]{Proposition}
\newtheorem{lemma}[theorem]{Lemma}
\theoremstyle{definition}
\theoremstyle{remark}
\newtheorem{remark}{Remark}[theorem]
\def\R{{\mathbb R}}
\def\N{{\mathbb N}}
\newcommand{\xR}{{]}{-\infty},+\infty]}
\newcommand{\Rex}{\xR}
\newcommand{\Rb}{\overline{\R}}
\newcommand{\ps}{\smallbreak}
\newcommand{\lsc}{lsc}
\newcommand{\del}{\partial}
\newcommand{\delc}{\widehat{\del}}
\newcommand{\delsf}{f^\del}
\newcommand{\dom} {{\rm dom} \kern.15em}
\newcommand{\tq}{:}
\newcommand{\la}{\langle}
\newcommand{\ra}{\rangle}
\newcommand{\ld}{\lambda}
\newcommand{\eps}{\varepsilon}
\newcommand{\bx}{\bar{x}}
\newcommand{\xb}{\bar{x}}
\newcommand{\xt}{x_\eps}
\newcommand{\tow}{{\stackrel{w^*}{\longrightarrow}}\;}
\begin{document}
\thispagestyle{empty}
\begin{center}
{\large\bf\sc 
Subderivative-subdifferential duality formula}
\end{center}

\begin{center}
  {\small\begin{tabular}{c}
  Marc Lassonde\\
   Universit\'e des Antilles, BP 150, 97159 Pointe \`a Pitre, France; and\\
   LIMOS, Universit\'e Blaise Pascal, 63000 Clermont-Ferrand, France\\
  E-mail: marc.lassonde@gmail.com
  \end{tabular}}
\end{center}

\medbreak\noindent
\textbf{Abstract.}
We provide a formula linking the radial subderivative
to other subderivatives and subdifferentials for arbitrary extended
real-valued lower semicontinuous functions.
\medbreak\noindent
\textbf{Keywords:}
  lower semicontinuity, radial subderivative, Dini subderivative,
  subdifferential.
  
\medbreak\noindent
\textbf{2010 Mathematics  Subject Classification:}
  49J52, 49K27, 26D10, 26B25.
\section{Introduction}\label{intro}
Tyrrell Rockafellar and Roger Wets \cite[p. 298]{RW98} discussing the duality
between subderivatives and subdifferentials write
{\small
\begin{quote}
In the presence
of regularity, the subgradients and subderivatives of a function $f$
are completely dual to each other. [\ldots]
For functions $f$ that aren't subdifferentially regular, subderivatives and
subgradients can have distinct and independent roles, and
some of the duality must be relinquished.
\end{quote}
}

Jean-Paul Penot \cite[p. 263]{Pen13}, in the introduction to the chapter dealing
with elementary and viscosity subdifferentials, writes
{\small
\begin{quote}
In the present framework, in contrast to the convex
objects, the passages from directional derivatives (and tangent
cones) to subdifferentials (and normal cones, respectively)
are one-way routes, because
the first notions are nonconvex, while a dual object exhibits convexity properties.
\end{quote}
}
In the chapter concerning Clarke subdifferentials \cite[p. 357]{Pen13},
he notes
{\small
\begin{quote} 
In fact, in this theory, a complete primal-dual picture is available:
besides a normal cone concept, one
has a notion of tangent cone to a set, and besides a subdifferential
for a function
one has a notion of directional derivative. Moreover,
inherent convexity properties ensure a full duality between these notions. [\ldots].
These facts represent great theoretical and practical
advantages.
\end{quote}
}

In this paper, we consider arbitrary extended real-valued
lower semicontinuous functions and arbitrary subdifferentials.
In spite of the above quotes, we show that there is always a duality formula linking
the subderivatives and subdifferentials of such functions.
Moreover, we show that at points where the (lower semicontinuous)
function satisfies a mild regularity property
(called radial accessibility),
the upper radial subderivative is always a lower bound for
the expressions in the duality formula.

This lower bound is an equality in particular
for convex functions, but also for
various other classes of functions.
For such functions, the radial subderivative can  therefore be recovered
from the subdifferential, and consequently 
the function itself, up to a constant,  can be recovered from 
the subdifferential.
This issue is discussed elsewhere.

\section{Subderivatives}

In the sequel, $X$ is a real Banach space with unit ball $B_X$,
$X^*$ is its topological dual,
and $\la .,. \ra$ is the duality pairing.
For $x, y \in X$, we let $[x,y]:=\{ x+t(y-x) \tq t\in[0,1]\}$;
the sets $]x,y[$ and $[x,y[$ are defined accordingly.
Set-valued operators $T:X\rightrightarrows X^*$
are identified with their graph $T\subset X\times X^*$.
For a subset $A\subset X$, $x\in X$ and $\ld>0$, we let
$d_A(x):=\inf_{y\in A} \|x-y\|$ and $B_\ld(A):=\{ y\in X\tq d_A(y)\le \ld\}$. 
All extended-real-valued functions $f : X\to\xR$ are assumed to be
lower semicontinuous (lsc)
and \textit{proper}, which means that
the set  $\dom f:=\{x\in X\tq f(x)<\infty\}$ is non-empty.
\medbreak
For a lsc function $f:X\to\xR$, a point $\xb\in\dom f$ and
a direction $u\in X$,
we consider the following basic subderivatives (we essentially follow the
terminology of Penot's textbook \cite{Pen13}):

- the (lower right Dini) \textit{radial subderivative}: 
\begin{equation*}\label{Dinisub}
f^r(\xb;u):=\liminf_{t\searrow 0}\,\frac{f(\xb+tu)-f(\xb)}{t},
\end{equation*}
its upper version:
\begin{equation*}\label{Dinisub}
f^r_+(\xb;u):=\limsup_{t\searrow 0}\,\frac{f(\xb+tu)-f(\xb)}{t},
\end{equation*}
and its upper strict version (the \textit{Clarke subderivative}): 
\begin{equation*}\label{Clarkesub}
f^0(\bx;u):=  
\limsup_{t \searrow 0 \atop{(x,f(x)) \to (\bx,f(\bx))}}\frac{f(x+tu) -f(x)}{t};
\end{equation*}

- the (lower right Dini-Hadamard) \textit{directional subderivative}:
\begin{equation*}\label{Hsubderiv}
f^d(\bx;u):=
\liminf_{t \searrow 0 \atop{u' \to u}}\frac{f(\bx+tu')-f(\bx)}{t},
\end{equation*}
and its upper strict version (the Clarke-Rockafellar subderivative): 
\begin{equation*}\label{Csubderiv}
f^\uparrow(\bx;u):= \sup_{\delta>0} 
\limsup_{t \searrow 0 \atop{(x,f(x)) \to (\bx,f(\bx))}}
\inf_{u' \in B_{\delta}(u)}\frac{f(x+tu') -f(x)}{t}.
\end{equation*}

It is immediate from these definitions that the following inequalities hold
($\rightarrow$ means $\le$):
\begin{align*}
f^r(\xb;u)  & \rightarrow f^r_+(\xb;u)\rightarrow f^0(\xb;u)\\
\uparrow \quad &     \qquad\qquad\qquad\quad\uparrow\\
f^d(\xb;u)  & \qquad\longrightarrow \quad\quad f^\uparrow(\xb;u)
\end{align*}
It is well known (and easily seen) that for a function $f$
locally Lipschitz at $\xb$,
we have $f^r(\xb;u)=f^d(\xb;u)$ and $f^0(\xb;u)=f^\uparrow(\xb;u)$,
whereas for a lsc convex $f$, we have $f^d(\xb;u)=f^\uparrow(\xb;u)$.
A function $f$ satisfying such an equality is called \textit{regular}. 
However, in general,
$f^d(\xb;u)<f^\uparrow(\xb;u)$, and there are many other types of
subderivatives $f'$ which lie between $f^d$ and $f^\uparrow$.
\ps
The inequality stated in the theorem below is (much) less elementary. It
is the analytic form of Treiman's theorem \cite{Tre83}
on the inclusion of the lower limit of Boulingand contingent cones
at neighbouring points of $\xb$ into the Clarke tangent cone at $\xb$
in the context of a Banach space
(in finite dimensional spaces, equality holds between these objects,
as was shown earlier by Cornet \cite{Cor81} and Penot \cite{Pen81}).
A proof of this inequality (or equality in finite dimensional spaces)
based on this geometrical approach was given by Ioffe \cite{Iof84}
(see also Rockafellar-Wets \cite[Theorem 8.18]{RW98}).
For a proof (in the general context of Banach spaces)
using a multidirectional mean value inequality rather than the above
geometric approach, see Correa-Gajardo-Thibault \cite{CGT09}.

\begin{theorem}[Link between subderivatives]\label{Treiman}
Let $X$ be a Banach space,
$f:X\to\xR$ be lsc, $\xb\in\dom f$ and $u\in X$.
Then:
{
$$f^\uparrow(\xb;u)\le \sup_{\eps>0}\limsup_{x\to\xb} \inf_{u'\in B(u,\eps)}f^d(x;u').$$
}
\end{theorem}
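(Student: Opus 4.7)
The plan is to argue by contradiction and reduce the statement to a multidirectional mean value inequality for the directional subderivative $f^d$. Let $r$ be strictly greater than the right-hand side. Fixing an arbitrary $\delta>0$, the hypothesis applied with $\eps=\delta/2$ yields a neighborhood $V$ of $\xb$ such that, for every $x\in V$,
$$\inf_{v\in B(u,\delta/2)}f^d(x;v)<r,$$
that is, at each $x\in V$ there exists at least one direction close to $u$ along which the directional subderivative is less than $r$. The goal is to deduce $f^\uparrow(\xb;u)\le r$.

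Fix this $\delta$ and suppose, towards a contradiction, that the limsup in the definition of $f^\uparrow(\xb;u)$ for this $\delta$ exceeds $r$. Then there are $\eta>0$, sequences $x_n\to\xb$ with $f(x_n)\to f(\xb)$, and $t_n\searrow 0$ such that
$$f(x_n+t_n u')-f(x_n)>(r+\eta)\,t_n\qquad\text{for every }u'\in B_\delta(u).\qquad(\dagger)$$
The core step is a multidirectional mean value argument: from the uniform growth property $(\dagger)$ I would construct a point $z_n$ close to the segment $[x_n,x_n+t_n u]$ (so $z_n\to\xb$) such that
$$f^d(z_n;v)\ge r+\eta/2\quad\text{for every }v\in B(u,\delta/2).$$
Once $n$ is large enough that $z_n\in V$, this contradicts the property of $V$ displayed above, and the theorem follows.

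The main obstacle is precisely this construction of $z_n$: it is the subderivative form of the Clarke--Ledyaev multidirectional mean value inequality that underlies Treiman's theorem. I would carry it out following the variational scheme of Correa--Gajardo--Thibault \cite{CGT09}, which avoids the geometric detour through tangent and normal cones. Concretely, one applies Ekeland's variational principle to $f$ on a thin tube of radius proportional to $t_n$ around $[x_n,x_n+t_n u]$, perturbed by a penalty term whose role is to convert $(\dagger)$ into a definite slope condition at the Ekeland minimizer $z_n$. The minimizing property at $z_n$ then forbids the existence of any direction $v$ near $u$ along which $f$ could decrease with slope less than $r+\eta/2$, yielding the required lower bound on $f^d(z_n;\cdot)$ on a small ball around $u$. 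All the analytic work is concentrated in this variational step; the reductions above and the final contradiction with the choice of $V$ are direct manipulations of the definitions of $f^d$ and $f^\uparrow$.
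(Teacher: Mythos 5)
The paper does not actually prove Theorem~\ref{Treiman}: it records the inequality as the analytic form of Treiman's theorem and refers the reader to Ioffe, to Rockafellar--Wets, and, for the Banach-space argument via a multidirectional mean value inequality, to Correa--Gajardo--Thibault \cite{CGT09}. Your reductions are correct and your strategy is exactly this last route. Writing $r$ for a strict upper bound of the right-hand side, extracting a neighbourhood $V$ on which $\inf_{v\in B(u,\delta/2)}f^d(\cdot;v)<r$, negating the inner $\limsup$ in the definition of $f^\uparrow$ to obtain the uniform growth $(\dagger)$, i.e. $\inf_{Y_n}f\ge f(x_n)+(r+\eta)t_n$ on the balls $Y_n:=B(x_n+t_nu,t_n\delta)$, and noting that this is precisely the hypothesis of a multidirectional mean value inequality, are all sound; the points $z_n$ such an inequality produces do converge to $\xb$, since they lie in a shrinking neighbourhood of the drop $[x_n,Y_n]$, so the final contradiction with the choice of $V$ goes through.

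The gap is that the one step carrying all of the analytic content --- the existence of $z_n$ with $f^d(z_n;v)\ge r+\eta/2$ for \emph{every} $v\in B(u,\delta/2)$ --- is only described, not proved. This is genuinely stronger than the scalar mean value inequality (compare Lemma~\ref{mvi}, which bounds $f^r(x_0;\xb-x)$ in a single direction): you need the lower bound on $f^d(z_n;\cdot)$ to hold uniformly over a ball of directions whose radius does not degenerate below $\delta/2$, which is exactly the multidirectional (Clarke--Ledyaev type) statement in Dini--Hadamard form. If you are allowed to quote that inequality from \cite{CGT09}, the argument closes (after the routine check that $f$ is bounded below on the tube for large $n$, which follows from lower semicontinuity at $\xb$). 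If the proof is meant to be self-contained, the Ekeland-plus-penalty scheme you sketch is the right one but must actually be executed, and it is the delicate part: the penalty has to be designed so that the minimality of $z_n$ controls the difference quotients of $f$ at $z_n$ in all directions of $B(u,\delta/2)$ simultaneously, not merely along $u$. As written, the proposal identifies the correct tool --- the same one the paper cites --- but does not supply the proof of the theorem.
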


\section{Subdifferentials}
Given a lsc function $f:X\to\xR$ and a point $\xb\in\dom f$,
we consider the following two basic subsets of the dual space $X^*$:

- the \textit{Moreau-Rockafellar subdifferential}
(the subdifferential of convex analysis):
\begin{equation*}\label{convex-sdiff}
 \del_{MR} f (\xb) :=
 \{ x^* \in X^* \tq \la x^*,y-\xb\ra + f(\xb) \leq f(y),\, \forall y \in X \};
\end{equation*}

- the \textit{Clarke subdifferential}, associated to the Clarke-Rockafellar subderivative:
\begin{eqnarray*}\label{Csub}
\partial_{C} f(\bx) := \{x^* \in X^* \tq \langle x^*,u\rangle \leq
f^\uparrow(\bx;u), \, \forall u \in X\}.
\end{eqnarray*}

All the classical subdifferentials (proximal,
Fr\'echet, Hadamard, Ioffe, Michel-Penot, \ldots)
lie between these two subsets.
It is well known that for a lsc convex $f$, $\del_{MR} f =\del_C f$,
so all the classical subdifferentials coincide in this case.
\ps
In the sequel, we call \textit{subdifferential} any operator $\del$
that associates 
a set-valued mapping $\partial f: X \rightrightarrows X^\ast$
to each function $f$ on $X$ so that
\begin{equation*}\label{inclusdansClarke}
\del_{MR} f\subset \partial f\subset \partial_{C} f
\end{equation*}
and the following \textit{Separation Principle} is satisfied in $X$:
\medbreak
(SP)
\textit{For any lsc $f,\varphi$ with $\varphi$ convex Lipschitz
near $\xb\in\dom f $,
if $f+\varphi$ admits a local minimum at $\xb$, then
$0\in \delc f(\xb)+ \del \varphi(\xb),$ where
\begin{multline}\label{wclosure}
\delc f(\xb):= \{\, \xb^*\in X^*\tq \mbox{there is a net }((x_\nu,x^*_\nu))_\nu\subset \del f \mbox{ with }\\
       (x_\nu,f(x_\nu))\to (\bx,f(\bx)),\ x^*_\nu\tow \bx^*,\ \limsup_\nu\,\la x^*_\nu,x_\nu-\xb\ra\le 0\,\}.
\end{multline}
}
\begin{remark} \label{SP-rem}
(a) In our paper \cite{JL13}, the set $\delc f(\xb)$ defined in
\eqref{wclosure} is called the \textit{weak*-controlled closure} of
the set-valued map $\del f$ at point $\xb$. The reason to consider
such a closure is that, even for a convex lsc function $f$, the
a priori simpler $strong\times weak^*$-closure of the graph of
$\del f=\del_{MR} f$ is too big for the Separation Principle to be meaningful.
The graph of $\del_{MR} f$ is not $strong\times weak^*$-closed in general:
see, e.g., \cite{JL02} for a discussion on what would be sufficient
to add to the $strong\times weak^*$ topology on $X\times X^*$
to guarantee the closure of such graphs.
More precisely, the graph of the convex subdifferential 
is $strong\times weak^*$-closed for each lsc convex function
if and only if X is finite dimensional (see \cite{BFG03}).
It is worth noting (and easily seen) that, as expected, always
$\del_{MR} f=\delc_{MR} f$.

\ps
(b) If we require the net
$((x_\nu,x^*_\nu))_\nu\subset\del f$ in \eqref{wclosure}
to be actually a sequence $((x_{n},x_{n}^*))_n$, $n\in\N$
(in which case the control assertion $\limsup_n\,\la x^*_n,x_n-\xb\ra\le 0$
is automatically satisfied), we obtain the so-called `limiting subdifferentials'.
A widely used such limiting subdifferential
is the
weak$^*$ sequential closure of the Fr\'echet subdifferential,
known as the \textit{Mordukhovich subdifferential}.

\ps
(c) The Separation Principle (SP) is a very simple property expected to
be satisfied by a subdifferential $\del$ in a Banach space $X$.
This property is actually equivalent to various
other properties of the
subdifferential $\del$ in the Banach space X: see \cite{JL13}.
\end{remark}

We recall that the Clarke subdifferential, the Michel-Penot subdifferential and
the Ioffe subdifferential satisfy the Separation Principle in any Banach space.
The elementary subdifferentials (proximal, Fr\'echet, Hadamard, \ldots),
as well as their viscosity and limiting versions,
satisfy the Separation Principle in appropriate Banach spaces:
the Fr\'echet subdifferential in Asplund spaces,
the Hadamard subdifferential in separable spaces,
the proximal subdifferential in Hilbert spaces.
The Moreau-Rockafellar subdifferential does not satisfy the Separation
Principle for the whole class of lsc (non necessarily convex) functions:
it is not a subdifferential for this wide class.
See, e.g. \cite{Iof12,JL13,Pen13} and the references therein.
\medbreak
The following link between the radial subderivative and
arbitrary subdifferentials was established in \cite[Theorem 2.1]{JL14}
(see also \cite[Theorem 3.2]{JL13}):

\begin{theorem}[Link between radial subderivative and
subdifferentials]\label{JL}
Let $X$ be a Banach space,
$f:X\to\Rex$ be lsc, $\xb\in\dom f$ and $u\in X$.
Then, there is a sequence $((x_n,x^*_n))\subset\del f$ such that
$x_n\to \xb$, $f(x_n)\to f(\xb)$,
$$f^r(\xb;u)\le \liminf_{n}\,\langle x^*_n,u\rangle
\text{ and }
\limsup_{n}\,\langle x^*_n,x_n-\xb\rangle\le 0.$$
\end{theorem}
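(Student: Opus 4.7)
The plan is to apply a Zagrodny-type approximate mean value inequality (AMVI) for the subdifferential~$\del$, itself a standard consequence of the Separation Principle (SP), to shorter and shorter segments $[\xb,\xb+\tc_k u]$, and to extract the desired sequence by a diagonal argument. The scheme rests on the fact that, setting $\alpha:=f^r(\xb;u)$ (finite; the case $\alpha=-\infty$ is vacuous for the first inequality, and only a density statement for $\del f$ near~$\xb$ is then needed, which follows from (SP) applied to $f$ plus a small convex Lipschitz penalty), the very definition of the radial subderivative allows us to pick $\tc_k\searrow 0$ with
$$\frac{f(\xb+\tc_k u)-f(\xb)}{\tc_k}\longrightarrow \alpha.$$

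For each~$k$, the AMVI applied to $f$ between $\xb$ and $\xb+\tc_k u$ produces a point $c_k\in[\xb,\xb+\tc_k u[$ and a sequence $((z_n^{(k)},z_n^{*(k)}))_n\subset\del f$ such that $z_n^{(k)}\to c_k$, $f(z_n^{(k)})\to f(c_k)$, $\la z_n^{*(k)},z_n^{(k)}-c_k\ra\to 0$, and
$$\liminf_n\,\la z_n^{*(k)},\,\tc_k u\ra\ \ge\ f(\xb+\tc_k u)-f(\xb).$$
Dividing by~$\tc_k$ gives $\liminf_n\la z_n^{*(k)},u\ra\ge (f(\xb+\tc_k u)-f(\xb))/\tc_k \to\alpha$. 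Since $\|c_k-\xb\|\le\tc_k\|u\|\to 0$, lower semicontinuity at~$\xb$ yields $\liminf_k f(c_k)\ge f(\xb)$, and the matching upper bound $\limsup_k f(c_k)\le f(\xb)$ follows from the AMVI selection (the underlying minimization keeps $f(c_k)$ close to $f(\xb)$ on short segments).

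A diagonal extraction then picks indices $n_k$ so that, setting $(x_k,x_k^*):=(z_{n_k}^{(k)},z_{n_k}^{*(k)})$, one has $\|x_k-c_k\|<\tc_k$, $|f(x_k)-f(c_k)|<1/k$, $|\la x_k^*,x_k-c_k\ra|<\tc_k$, and $\la x_k^*,u\ra\ge(f(\xb+\tc_k u)-f(\xb))/\tc_k-1/k$. Then $x_k\to\xb$, $f(x_k)\to f(\xb)$ and $\liminf_k\la x_k^*,u\ra\ge\alpha$, as required. For the control condition one decomposes
$$\la x_k^*,x_k-\xb\ra \ =\ \la x_k^*,x_k-c_k\ra + s_k\la x_k^*,u\ra,\qquad s_k\in[0,\tc_k[,$$
so that both summands tend to~$0$. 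The main delicate point is precisely this last control: if $\la x_k^*,u\ra$ were to blow up along the selection, $s_k\la x_k^*,u\ra$ need not vanish, so either one truncates the diagonal extraction to keep $\la x_k^*,u\ra$ inside a window $[\alpha-\eps,\alpha+M]$, or one invokes the sharper version of the AMVI that directly furnishes the control $\limsup_n\la z_n^{*(k)},z_n^{(k)}-\xb\ra\le 0$---this is the form adapted to the $\delc$-closure occurring in~(SP), as provided by \cite[Theorem~3.2]{JL13}. Choosing the right variant of the mean value inequality compatible with~(SP) is therefore the crux of the argument.
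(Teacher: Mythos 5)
The paper does not actually prove Theorem \ref{JL}: it quotes it from \cite[Theorem 2.1]{JL14} (see also \cite[Theorem 3.2]{JL13}), so the only in-house comparison point is the appendix, where the refined Theorem \ref{belowapbis} is proved by a direct Ekeland-plus-Separation-Principle construction. Your plan --- realize $f^r(\xb;u)$ along $\tc_k\searrow 0$, apply a Zagrodny-type approximate mean value inequality on $[\xb,\xb+\tc_k u]$, then diagonalize --- is the natural one, and it does deliver $x_n\to\xb$, $f(x_n)\to f(\xb)$ and $f^r(\xb;u)\le\liminf_n\la x_n^*,u\ra$ (modulo routine care with the cases $f^r(\xb;u)=\pm\infty$).

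The gap is exactly where you locate it, and neither of your two fixes closes it. Writing $c_k=\xb+s_k u$ with $s_k\in[0,\tc_k[$, you must show $\limsup_k s_k\la x_k^*,u\ra\le 0$. The black-box AMVI only provides a \emph{lower} bound on $\la z_n^{*(k)},u\ra$; there is no upper bound anywhere in its conclusion, so ``truncating the diagonal extraction to keep $\la x_k^*,u\ra$ inside a window $[\alpha-\eps,\alpha+M]$'' is not a selection you are entitled to make --- nothing prevents every admissible $z_n^{*(k)}$ from having $\la z_n^{*(k)},u\ra$ arbitrarily large, in which case $s_k\la x_k^*,u\ra$ need not vanish. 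Your alternative fix is to invoke the variant of the mean value inequality that already carries the control $\limsup_n\la z_n^{*},z_n^{}-\xb\ra\le 0$ relative to the \emph{left endpoint} $\xb$ rather than the intermediate point $c_k$, and you cite \cite[Theorem 3.2]{JL13} for it; but that is precisely the statement being proved (the paper names that very theorem as the source of Theorem \ref{JL}), so this route is circular. What actually produces the control in the source proof --- and visibly in the appendix's proof of Theorem \ref{belowapbis} and in Theorem \ref{dense} --- is that the subgradient is not extracted from a generic AMVI but is built by applying Ekeland and (SP) to $f+z^*+(\text{small penalties})$ with $\la z^*,u\ra=-\gamma$: the resulting $x_n^*$ equals $-z^*$ up to correction terms whose pairing with $x_n-\xb$ is estimated explicitly (a norm bound of order $\eps/\mu$ against a displacement of order $\mu$, and a distance-function subgradient whose pairing with the relevant directions is shown to be $\le 0$ by the geometry of the penalized segment). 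That quantitative coupling between the Ekeland parameter and the displacement is the missing ingredient; without it, the decomposition $\la x_k^*,x_k-\xb\ra=\la x_k^*,x_k-c_k\ra+s_k\la x_k^*,u\ra$ does not close.
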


\section{Subderivative-subdifferential duality formula}
A sequence $(x_n)\subset X$ is said to be
\textit{directionally convergent to $\xb$ from the direction $v\in X$},
written $x_n\to_v \xb$,
if there are two sequences $t_n\searrow 0$ (that is, $t_n\to 0$ with $t_n>0$)
and $v_n\to v$ such that
$x_n=\xb + t_n v_n$ for all $n$; equivalently: for every $\eps>0$
the sequence $(x_n-\xb)$ eventually lies in the open drop
${}]0,\eps B(v,\eps){[}:=\{\, tv'\tq 0<t<\eps,\ v'\in B(v,\eps)\,\}$.
Observe that for $v=0$, ${}]0,\eps B(v,\eps){[}=B(0,\eps^2)$
so $x_n\to_v\xb$ simply means $x_n\to\xb$.
We let $D(\xb,v,\eps):= \xb+{}]0,\eps B(v,\eps){[}$.

We call \textit{subderivative associated to a subdifferential $\del f$}
at a point $(\xb,u)\in \dom f\times X$, the \textit{support function} of
the set $\del f(\xb)$ in the direction $u$,
which we denote by
$$
f^\del(\xb;u):=
\sup \,\{\la \xb^*,u \ra \tq \xb^*\in\del f(\xb)\}.$$
In the theorem below, 
given a function $f:X\to\Rex$, we denote by $f':\dom f\times X\to \Rb$
any function lying between the subderivatives $f^d$ and $f^\uparrow$, that is: 
\begin{equation*}
f^d\le f'\le f^\uparrow.
\end{equation*}
Subderivatives and subdifferentials are linked by the following formula:

\begin{theorem}[Subderivative-subdifferential duality formula]\label{formula}
Let $X$ be a Banach space,
$f:X\to\xR$ be lsc, $\xb\in\dom f$ and $u\in X$.
Then, for any direction $v\in X$ and any real number $\alpha\ge 0$, one has
\begin{subequations}\label{formula0}
\begin{align}
\limsup_{x\to_v\xb} f^r(x;u+\alpha (\xb-x))&=
\limsup_{x\to_v\xb} f'(x;u+\alpha (\xb-x)) \label{formula0a}\\
&=\limsup_{x\to_v\xb}f^\del(x;u+\alpha (\xb-x)).
\label{formula0b}
\end{align}
\end{subequations}
\end{theorem}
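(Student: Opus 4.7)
The plan is to establish the cyclic chain
$$A \;\le\; C \;\le\; \limsup_{x\to_v \xb} f^\uparrow(x; u+\alpha(\xb-x)) \;\le\; A,$$
where $A$ and $C$ abbreviate the limsups in \eqref{formula0a} and \eqref{formula0b} respectively. Once this is in hand, the quantity $B := \limsup_{x\to_v\xb} f'(x;u+\alpha(\xb-x))$ is sandwiched between $\limsup f^d$ and $\limsup f^\uparrow$; since $f^d \le f^r$ pointwise forces $\limsup f^d \le A$ and the chain forces $\limsup f^\uparrow \le A$, all four quantities coincide with $A$, which is precisely the required pair of equalities.

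For $A \le C$, I would fix a sequence $x_n \to_v \xb$ with $x_n = \xb + t_n v_n$ realising $A$ and apply Theorem~\ref{JL} at each $x_n$ in the direction $u_n := u + \alpha(\xb - x_n)$. This produces sequences $(y_{n,m}, y_{n,m}^*)_m \subset \del f$ with $y_{n,m} \to x_n$, $f(y_{n,m}) \to f(x_n)$, $\liminf_m \la y_{n,m}^*, u_n\ra \ge f^r(x_n; u_n)$, and $\limsup_m \la y_{n,m}^*, y_{n,m} - x_n\ra \le 0$. A diagonal extraction picks $y_n := y_{n,m(n)}$ within $t_n^2/n$ of $x_n$ (so that $y_n \to_v \xb$) with $\la y_n^*, u_n\ra \ge f^r(x_n; u_n) - 1/n$ and $\la y_n^*, y_n - x_n\ra \le 1/n$. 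Writing $u + \alpha(\xb - y_n) = u_n + \alpha(x_n - y_n)$, the control inequality absorbs the direction shift and yields
$$f^\del(y_n; u+\alpha(\xb - y_n)) \;\ge\; \la y_n^*, u+\alpha(\xb-y_n)\ra \;\ge\; f^r(x_n; u_n) - (1+\alpha)/n;$$
passing to $\limsup_n$ gives $C \ge A$.

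The middle inequality $C \le \limsup f^\uparrow$ is immediate: by the very definition of $\del_C f$, every $x^*\in\del f(x)\subset\del_C f(x)$ satisfies $\la x^*,\cdot\ra\le f^\uparrow(x;\cdot)$, so $f^\del(x;\cdot)\le f^\uparrow(x;\cdot)$ pointwise. For $\limsup f^\uparrow \le A$, I would invoke Theorem~\ref{Treiman} at each $x$ close to $\xb$: since $u + \alpha(\xb - y) \to u_x := u + \alpha(\xb - x)$ as $y \to x$, the ball $B(u_x, \delta)$ eventually contains $u + \alpha(\xb - y)$, so the infimum in Treiman's bound is majorised by $f^d(y; u+\alpha(\xb - y))$, giving $f^\uparrow(x; u_x) \le \limsup_{y\to x} f^d(y; u+\alpha(\xb - y))$. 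A second diagonal extraction — again shrinking $\|y_n - x_n\|$ to $o(t_n)$ — collapses this double limsup into $\limsup_{y\to_v\xb} f^d(y; u+\alpha(\xb - y))$, and then $f^d \le f^r$ closes the chain.

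The main obstacle is purely technical: in both diagonal extractions, the newly chosen sequence must remain directionally convergent to $\xb$ with direction $v$, which forces the perturbation $\|y_n - x_n\|$ to be $o(t_n)$ rather than merely $o(1)$. The accompanying shift of reference point $x_n \rightsquigarrow y_n$ inside the varying direction $u + \alpha(\xb - \cdot)$ is what would otherwise be fatal, and it is exactly here that the two main tools play complementary roles: in Step~1 the shift is neutralised by the control clause $\limsup_n\la y_n^*, y_n - x_n\ra \le 0$ of Theorem~\ref{JL}, and in Step~3 it is absorbed by the tolerance ball $B(u_x,\delta)$ inside Treiman's inequality.
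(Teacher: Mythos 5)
Your proposal is correct and follows essentially the same route as the paper: the same cyclic chain $\limsup f^r \le \limsup f^\del \le \limsup f^\uparrow \le \limsup f^d \le \limsup f^r$, with Theorem~\ref{JL} driving the first inequality, Theorem~\ref{Treiman} the third, and the sandwich $f^d\le f'\le f^\uparrow$ closing the argument. The only difference is presentational — you phrase the two nontrivial steps via sequences and diagonal extraction with $o(t_n)$ perturbation control, where the paper argues with the drops $D(\xb,v,\delta)$ and a threshold $\lambda$ — while the key mechanisms (the control clause $\limsup_n\la y_n^*,y_n-x_n\ra\le 0$ neutralising the reference-point shift, and the tolerance ball in Treiman's inequality absorbing it) are identical.
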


\begin{proof}
\textit{First step.}
We claim that
\begin{equation}\label{formula00}
\limsup_{x\to_v\xb} f^\uparrow(x;u+\alpha (\xb-x))\le
\limsup_{x\to_v\xb} f^d(x;u+\alpha (\xb-x)).
\end{equation}
To prove this inequality, we take $\lambda \in\R$ such that
\begin{equation}\label{formula000}
\limsup_{x\to_v\xb} f^d(x;u+\alpha (\xb-x))<\lambda
\end{equation}
and show that $\lambda$ is greater than or equal to the left-hand side
of \eqref{formula00}.

From \eqref{formula000} we can find $\delta>0$ such that
\begin{equation}\label{formula01}
x\in D(\xb, v,\delta) \Rightarrow f^d(x;u+\alpha (\xb-x))<\lambda.
\end{equation}
Let $z=\xb+tv'\in D(\xb, v,\delta/2)$ and let
$\mu< f^\uparrow(z;u+\alpha(\xb-z))$.
By Theorem \ref{Treiman}, there exist $\eps>0$ and
$x\in B(z,\rho)$, with $0<\rho\le t\delta/2$ and $\alpha\rho\le\eps$,
such that
\begin{equation}\label{pas1}
\mu< f^d(x;u+\alpha(\xb-z)+w) \text{ for every } w\in B(0,\eps).
\end{equation}
Since $\alpha\|z-x\|\le \alpha\rho\le\eps$, putting $w=\alpha(z-x)$ in \eqref{pas1},
we infer that
\begin{equation}\label{pas2}
\mu< f^d(x;u+\alpha(\xb-x)).
\end{equation}
Since $\|x-\xb-tv'\|=\|x-z\|\le \rho\le t\delta/2$, we have $v^{''}:=(x-\xb)/t\in B(v',\delta/2)\subset B(v,\delta)$, showing that 
$x=\xb+tv^{''} \in D(\xb, v,\delta)$. Therefore, by \eqref{formula01},
\begin{equation}\label{pas3}
f^d(x;u+\alpha(\xb-x))<\lambda.
\end{equation}
Combining \eqref{pas2} and \eqref{pas3}, we derive that $\mu<\lambda$.
Since $\mu$ was arbitrarily chosen less than $f^\uparrow(z;u+\alpha(\xb-z))$,
we conclude that
$$z\in D(\xb, v,\delta/2) \Rightarrow
f^\uparrow(z;u+\alpha(\xb-z))<\lambda,$$
hence,
$$\limsup_{x\to_v\xb} f^\uparrow(x;u+\alpha (\xb-x))\le\lambda.$$
This completes the proof of \eqref{formula00}.

\medbreak
\textit{Second step.}
We claim that
\begin{equation}\label{formula00b}
\limsup_{x\to_v\xb} f^r(x;u+\alpha (\xb-x))\le
\limsup_{x\to_v\xb} f^\del(x;u+\alpha (\xb-x)).
\end{equation}
As in the first step, to prove this inequality we take
$\lambda \in\R$ such that
\begin{equation}\label{formula000b}
\limsup_{x\to_v\xb}f^\del(x;u+\alpha (\xb-x))<\lambda
\end{equation}
and show that $\lambda$ is greater than or equal to the left-hand side
of \eqref{formula00b}.

From \eqref{formula000b} we can find $\delta>0$ such that
\begin{equation}\label{formula02}
x\in D(\xb, v,\delta) \Rightarrow \sup \,\{ \la x^*, u+
\alpha (\xb-x)\ra\tq \xb^*\in\del f(x)\}<\lambda.
\end{equation}
Let 
$z=\xb+tv'\in D(\xb, v,\delta/2)$.
By Theorem \ref{JL}, for any $\mu< f^r(z;u+\alpha(\xb-z))$
and $\eps>0$ there exist
$x\in B(z,t\delta/2)$ and $x^*\in \del f(x)$
such that
$$\mu< \la x^*,u+\alpha(\xb-z)\ra \text{ and }
\la x^*,x-z\ra \le \eps.$$
As above, we can verify that $x\in D(\xb, v,\delta)$.
Therefore, by \eqref{formula02},
$$\mu< \la x^*,u+\alpha(\xb-z)\ra=\la x^*,u+\alpha(\xb-x)\ra
+\alpha \la x^*,x-z\ra < \lambda +\alpha\eps.$$
Since $\mu$ and $\eps$ were arbitrary, we derive that
$$z\in D(\xb, v,\delta/2) \Rightarrow f^r(z;u+\alpha(\xb-z))< \lambda,$$
showing that \eqref{formula00b} holds.

\medbreak
\textit{Third step.}
Since $\del f\subset \del_C f$, we have
$f^\del (z;u') \le f^{\del_C} (z;u')
\le f^\uparrow(z;u')$ for every $u'\in X$.
Hence, the right-hand side of \eqref{formula00b}
is less than or equal to the left-hand side of \eqref{formula00}.
On the other hand, $f^d\le f^r$. So all the expressions in
formulas \eqref{formula00} and \eqref{formula00b} are equal.
The desired set of equalities \eqref{formula0a}--\eqref{formula0b}
follows because $f^d\le f'\le f^\uparrow$.
\end{proof}

\begin{remark}
(a) In the special case $v=0$ and $\alpha= 0$, the formula \eqref{formula0a}
was proved by Borwein-Str\'ojwas \cite[Theorem 2.1 and Corollary 2.3]{BS89}
\smallbreak

(b) For $f$ locally Lipschitz at $\xb$, the formulas \eqref{formula0}
do not depend on $\alpha\ge 0$ since
\begin{equation*}\label{formula0lip}
\limsup_{x\to_v\xb} f^r(x;u+\alpha (\xb-x))=\limsup_{x\to_v\xb} f^r(x;u).
\end{equation*}
But they may depend on the direction $v\in X$:
for $f:x\in \R\mapsto f(x):=-|x|$ and $u\ne 0$, one has
$$\limsup_{x\to_u 0} f^r(x;u)=-|u|<\limsup_{x\to 0} f^r(x;u)=|u|.$$
\smallbreak

(c) For arbitrary lsc $f$, the value of the expressions
in \eqref{formula0} depends on $\alpha\ge 0$ even for convex $f$.
Indeed, as was recalled in Remark \ref{SP-rem}\,(a),
the graph of the subdifferential 
$$\del_{MR}f(\bx) = \{x^* \in X^* \tq \langle x^*,u\rangle \leq
f^r(\bx;u), \, \forall u \in X\}
$$
is generally not $strong\times weak^*$-closed.
\if{
in general the function $x\mapsto f^r(x;u)$ is not
upper semicontinuous; otherwise the subdifferential
$$\del_{MR}f(\bx) = \{x^* \in X^* \tq \langle x^*,u\rangle \leq
f^r(\bx;u), \, \forall u \in X\}.
$$
would have a $strong\times weak^*$-closed graph, which is not the case
(see, e.g. \cite[page 521]{JL02}). 
}\fi
Therefore, for an arbitrary lsc convex $f$ the function $x\mapsto f^r(x;u)$
is generally not upper semicontinuous, that is
$$f^r(\xb;u)<\limsup_{x\to\xb} f^r(x;u),$$
while always (see Proposition \ref{convexcase} below)
$$
f^r(\xb;u)=\inf_{\alpha\ge 0}\limsup_{x\to\xb} f^r(x;u+\alpha (\xb-x)).
$$
\end{remark}

\begin{proposition}[Radial subderivative for convex lsc
functions]\label{convexcase}
Let $X$ be a Banach space,
$f:X\to\xR$ be convex lsc, $\xb\in\dom f$ and $u\in X$.
Then,
\begin{equation}\label{convexformula0}
f^r(\xb;u)=\inf_{\alpha\ge 0}\limsup_{x\to\xb} f^r(x;u+\alpha (\xb-x)).\end{equation}
\end{proposition}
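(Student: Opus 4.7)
My plan is to prove the two inequalities of \eqref{convexformula0} separately. The direction $f^r(\xb;u)\le \inf_{\alpha\ge 0}\limsup_{x\to\xb}f^r(x;u+\alpha(\xb-x))$ is free: for every $\alpha\ge 0$, the point $x=\xb$ belongs to every neighbourhood of $\xb$, so $\limsup_{x\to\xb}f^r(x;u+\alpha(\xb-x))\ge f^r(\xb;u+0)=f^r(\xb;u)$.

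For the nontrivial direction I first invoke Theorem~\ref{formula} with $v=0$ and the Clarke subdifferential $\partial_C$, which qualifies as a subdifferential in the paper's sense in every Banach space. Since $\partial_C f=\partial_{MR}f$ on convex lsc functions, the theorem rewrites
$$\limsup_{x\to\xb}f^r(x;u+\alpha(\xb-x))=\limsup_{x\to\xb}f^{\partial_{MR}}(x;u+\alpha(\xb-x)).$$

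The heart of the argument is then a one-line application of the convex subgradient inequality. For $x\in\dom f$ and $x^*\in\partial_{MR}f(x)$, applying $f(y)\ge f(x)+\la x^*,y-x\ra$ at $y=\xb+u/\alpha$ and multiplying by $\alpha>0$ produces
$$\la x^*,u+\alpha(\xb-x)\ra\le \alpha\bigl(f(\xb+u/\alpha)-f(x)\bigr).$$
The crucial point is that the right-hand side is independent of $x^*$, so I may take the supremum over $x^*\in\partial_{MR}f(x)$ followed by the limsup as $x\to\xb$; lower semicontinuity of $f$ (which gives $\liminf_{x\to\xb}f(x)\ge f(\xb)$) then yields
$$\limsup_{x\to\xb}f^{\partial_{MR}}(x;u+\alpha(\xb-x))\le \alpha\bigl(f(\xb+u/\alpha)-f(\xb)\bigr).$$

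Finally, the substitution $t=1/\alpha$ turns the right-hand side into the convex difference quotient $(f(\xb+tu)-f(\xb))/t$, which is non-decreasing in $t>0$ by convexity of $f$; its infimum over $t>0$ therefore equals $\lim_{t\searrow 0}(f(\xb+tu)-f(\xb))/t=f^r(\xb;u)$, and the degenerate case $f^r(\xb;u)=+\infty$ is trivial. I do not anticipate a real obstacle: the whole proof rests on the choice $t=1/\alpha$, which simultaneously eliminates the $x^*$-dependence in the subgradient bound and converts the expression into a difference quotient to which convex monotonicity applies directly.
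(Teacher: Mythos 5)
Your proof is correct. The skeleton --- the trivial inequality in one direction and, for the reverse, the substitution $\alpha=1/t$ reducing everything to the convex difference quotient $(f(\xb+tu)-f(\xb))/t$, whose monotonicity in $t$ together with the lower semicontinuity of $f$ finishes the job --- is exactly the paper's. Where you differ is the middle step: to bound $\limsup_{x\to\xb}f^r(x;tu+\xb-x)$ you first pass through Theorem~\ref{formula} (with $v=0$ and $\del=\del_C$, which equals $\del_{MR}$ on convex lsc functions) so as to replace $f^r$ by the support function of $\del_{MR}f$, and then apply the subgradient inequality at $y=\xb+tu$. The paper instead observes directly that convexity gives $f^r(x;tu+\xb-x)\le f(\xb+tu)-f(x)$: this is just the statement that the difference quotient $s\mapsto \bigl(f(x+s(tu+\xb-x))-f(x)\bigr)/s$ is nondecreasing, so its limit as $s\searrow 0$ is dominated by its value at $s=1$; no subdifferentials and no appeal to Theorem~\ref{formula} are needed. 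Your route is valid (the Clarke subdifferential does satisfy the paper's axioms in every Banach space, and the empty-subdifferential case is harmless since the supremum over the empty set is $-\infty$), but it invokes the full duality formula --- which itself rests on Treiman's theorem and the Separation Principle --- where a one-line primal estimate suffices. What your version buys is a clear illustration of how the proposition can be read off as a corollary of the duality formula plus elementary convex analysis.
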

\begin{proof}
Of course, $f^r(\xb;u)$ is always not greater than the expression of the right-hand side
of \eqref{convexformula0}.
It is not smaller either since,
for every $t>0$,
$$
\limsup_{x\to\xb} f^r(x;tu+\xb-x)\le \limsup_{x\to\xb}\, (f(\xb+tu)-f(x))=f(\xb+tu)-f(\xb),
$$
hence, writing $\alpha=1/t$ for $t>0$,
\begin{align*}
\inf_{\alpha\ge 0}\limsup_{x\to\xb} f^r(x;u+\alpha (\xb-x))
&\le \inf_{t> 0}\limsup_{x\to\xb} \frac{1}{t} f^r(x;tu+\xb-x)\\
&\le \inf_{t> 0}\frac{f(\xb+tu)-f(\xb)}{t}=f^r(\xb;u). \tag*{\qedhere}
\end{align*}
\end{proof}

\section{Radially accessible functions}

A lsc function $f:X\to\xR$ is said to be
\textit{radially accessible} at $\xb\in\dom f$ from a direction $u\in X$
provided $$f(\xb)=\liminf_{t\searrow 0}f(\xb+tu),$$
or equivalently, provided
there exists a sequence $t_n\searrow 0$ such that 
$f(\xb+t_n u)\to f(\xb)$. (The case $u=0$ is a tautology.)

\smallbreak
\noindent \textit{Examples.}
1. Every lsc function $f:X\to\xR$ which is radially upper semicontinuous
at $\xb\in\dom f$ from $u$
is evidently radially accessible at $\xb$ from $u$.
This is the case of convex lsc
functions $f$ for any $u\in X$ such that
$\xb+u\in \dom f$.
\smallbreak
2. If $f^r(\xb;u)<\infty$, then $f$ is radially accessible
at $\xb$ from $u$.
Indeed, let $\gamma\in\R$ such that $f^r(\xb;u)<\gamma$.
Then, there exists $t_n\searrow 0$ such that
$f(\xb + t_n u)\le f(\xb)+\gamma t_n$,
and consequently
$\limsup_{n}f(\xb + t_n u)\le f(\xb)$.
The condition $f^r(\xb;u)<\infty$ however is not necessary:
the continuous function $f:\R\to\R$ given by
$f(x):=\sqrt{|x|}$ has $f^r(0;u)=\infty$ for any $u\ne 0$.
\smallbreak
3. The function $f:\R\to\R$ given by
$$f(x):=
\left\{
\begin{array}{ll}
0 & \mbox{if } x=0 \mbox{ or } x= 1/n, \mbox{ for }n=1,2,\ldots\\
1 & \mbox{otherwise}.
\end{array}
\right.
$$
is lsc on $\R$, not upper semicontinuous at $0$
along the ray $\R_+u$ for $u>0$ but 
radially accessible at $0$ from such $u>0$.
\smallbreak
4. The function $f:\R\to\R$ given by
$$
f(x):=
\left\{
\begin{array}{ll}
1 & \mbox{if } x>0\\
0 & \mbox{if } x\le 0
\end{array}
\right.
$$
is lsc on $\R$ but not radially accessible at $0$ from $u=1$.
We notice that $f^r(0;1)=+\infty$, while $f^r(x;1)=0$ for any $x>0$.
\smallbreak
Radial accessibility is a mild regularity property.
Yet this property leads to a more consistent behaviour
of subdifferentials and subderivatives.
We give two illustrations. Assume the lsc function $f$ is
radially accessible at $\xb\in\dom f$ from a direction $u$.
Then first,
$\dom \del f$ contains a sequence graphically and \textit{directionally}
convergent to $\xb$ (Theorem \ref{dense}),
and second,
the upper radial subderivative $f^r_+(\xb;u)$ is stable with respect
to radially convergent sequences (Proposition \ref{devdir}).
From the latter statement we derive that
the upper radial subderivative is a lower bound
for the expressions in \eqref{formula0} with $v=u$ (Theorem \ref{belowap}).

\smallbreak
We recall the statement of Ekeland's variational principle \cite{Eke74}:
\ps
\textit{Variational Principle}.
For any lsc function $f$ defined on a closed subset $S$ of a Banach space,
$\bx\in\dom f$ and $\eps>0$ such that
$
f(\xb)\le \inf f(S) +\eps,
$
and for any $\lambda>0$, there exists $x_\lambda\in S$ such that
$\|x_\lambda-\xb\|\le\lambda$, $f(x_\lambda)\le f(\xb)$ and
the function $x\mapsto f(x)+(\eps/\lambda)\|x-x_\lambda\|$
attains its minimum on $S$ at $x_\lambda$.
\begin{theorem}[Directional density of subdifferentials]\label{dense}
Let $X$ be a Banach space,
$f:X\to\xR$ be lsc, $\xb\in\dom f$ and $u\in X$
such that $f$ is radially accessible at $\xb$ from $u$.
Then, there exists a sequence $((x_n,x_n^*))_n\subset\del f$ such that
$x_n\to_u \xb$, $f(x_n)\to f(\xb)$ and $\limsup_n \la x_n^*, x_n-\bx\ra\le 0$.
\end{theorem}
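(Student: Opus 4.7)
The plan is to apply Ekeland's variational principle to $f$ near the radial accessibility points $\xb_n := \xb + t_n u$, to invoke the Separation Principle (SP) on the resulting local minimum of $f$ plus a convex Lipschitz perturbation, and to extract the desired pair in $\del f$ from the net defining $\delc f$. The Ekeland parameters must be tuned so that the Ekeland ball radius is $o(t_n)$ (securing directional convergence $x_n \to_u \xb$) while simultaneously the product of $t_n$ with the Lipschitz constant of the perturbation tends to zero (controlling $t_n\la x_n^*,u\ra$).

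For each $n$, lower semicontinuity at $\xb$ supplies $\delta_n>0$ with $\inf_{\bar{B}(\xb,\delta_n)}f\ge f(\xb)-1/n^2$; radial accessibility then lets us pick $t_n \in (0,\delta_n/(2\|u\|+2))$ with $f(\xb+t_nu)\le f(\xb)+1/n^4$. Setting $S_n:=\bar{B}(\xb,\delta_n)$ and $\eps_n := f(\xb_n)-\inf_{S_n}f = O(1/n^2)$, I apply Ekeland's principle to $(f,S_n,\xb_n,\eps_n)$ with distance parameter $\lambda_n:=t_n/n$. This yields $x_n^E$ in the interior of $S_n$ with $\|x_n^E-\xb_n\|\le \lambda_n$ and such that $f+\varphi_n$, where $\varphi_n(y):=(\eps_n/\lambda_n)\|y-x_n^E\|$ is convex and Lipschitz, attains a local minimum at $x_n^E$. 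Property (SP) gives some $x_E^*\in \delc f(x_n^E)$ with $\|x_E^*\|\le \eps_n/\lambda_n$. Unfolding the definition of $\delc f(x_n^E)$, I pick for each $n$ one net element $(x_n,x_n^*)\in\del f$ satisfying simultaneously $\|x_n-x_n^E\|<t_n/n^2$, $|f(x_n)-f(x_n^E)|<1/n$, $\la x_n^*,x_n-x_n^E\ra<1/n$, and $\la x_n^*,v\ra$ within $1/n$ of $\la x_E^*,v\ra$ for each of the two fixed test vectors $v=u$ and $v=x_n^E-\xb_n$.

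Verification is then routine. Directional convergence $x_n \to_u \xb$ follows from $\|x_n-\xb_n\|\le t_n/n^2+t_n/n = o(t_n)$; the relation $f(x_n)\to f(\xb)$ follows from the Ekeland inequality $f(x_n^E)\le f(\xb_n)$ combined with the radial accessibility estimate; and the limsup condition comes from the decomposition $\la x_n^*,x_n-\xb\ra = \la x_n^*,x_n-x_n^E\ra + \la x_n^*,x_n^E-\xb_n\ra + t_n\la x_n^*,u\ra$, where each term is $O(1/n)$ via the two weak$^*$ pairings and the bounds $|\la x_E^*,x_n^E-\xb_n\ra|\le \|x_E^*\|\lambda_n = O(\eps_n)$ and $t_n|\la x_E^*,u\ra| \le t_n\|x_E^*\|\|u\| = O(t_n\eps_n/\lambda_n) = O(1/n)$. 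The main obstacle is that $\|x_E^*\|\le \eps_n/\lambda_n$ may (and in general must) blow up -- for instance like $1/(nt_n)$ when $f(x)=\sqrt{|x|}$ at $\xb=0$ -- so no norm estimate on $x_n^*$ is available; one must rely entirely on the finitely many weak$^*$ pairings chosen at each stage together with the parameter balance making both $\lambda_n/t_n$ and $t_n\eps_n/\lambda_n$ go to zero.
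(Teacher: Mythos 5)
Your proposal is correct and follows essentially the same route as the paper's proof: lower semicontinuity plus radial accessibility to calibrate the parameters, Ekeland's principle applied at $\xb+t_nu$ with a distance parameter that is $o(t_n)$ and a value parameter small enough that $t_n$ times the Lipschitz constant of the perturbation tends to $0$, the Separation Principle to produce $x_E^*\in\delc f(x_n^E)$ with $\|x_E^*\|\le\eps_n/\lambda_n$, and finally an approximation of $(x_n^E,x_E^*)$ by a genuine pair in $\del f$ using finitely many weak$^*$ pairings and the control condition in the definition of $\delc f$. Your closing remark about the unavoidable blow-up of $\|x_E^*\|$ and the compensating parameter balance is exactly the point the paper's choice $\lambda=\mu\eps$ with value parameter $\eps^2$ is designed to handle.
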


\begin{proof}
Let $\eps>0$.
Since $f$ is \lsc\ at $\xb$, there exists $\delta\in ]0,\eps^2[$ such that
\begin{equation}\label{sci0}
f(\xb)\le \inf f(B_\delta(\xb)) + \eps^2,
\end{equation}
and since $f(\xb)=\liminf_{t\searrow 0}f(\xb+tu)$,
there exists $\mu>0$ such that $\mu(\eps+\|u\|)<\delta$ and
$f(\xb+\mu u)\le f(\xb)+\eps^2$.
Summarizing, we can find real numbers $\delta$ and $\mu$ satisfying
\begin{subequations}\label{sci}
\begin{gather}
0< \mu(\eps+\|u\|)<\delta<\eps^2, \mbox{ and} \label{sci1}\\
f(\xb+\mu u)\le \inf f(B_\delta(\xb)) + \eps^2\label{sci2}.
\end{gather}
\end{subequations}
Now we apply Ekeland's variational principle to $f$ on the set $B_\delta(\xb)$
at point $\xb+\mu u$ with $\lambda=\mu\eps$. Observe that the ball
$B_\lambda(\xb+\mu u)$ is contained in the ball $B_\delta(\xb)$
by \eqref{sci1}. We therefore obtain a point $\xt\in X$ such that
\begin{subequations}
\begin{gather}
\|\xt-(\xb+\mu u)\|< \mu\eps, ~f(\xt)\le f(\xb+\mu u), \mbox{ and}  \label{BP1}\\
y\mapsto f(y)+(\eps/\mu)\|y-\xt\| \mbox{ admits a local minimum at } \xt. \label{BP2}
\end{gather}
\end{subequations}
In view of (\ref{BP2}), we may apply the Separation Principle at point $\xt$
with the convex Lipschitz function $\varphi:y\mapsto (\eps/\mu)\|y-\xt\|$ to obtain
a subgradient $\xt^*\in \delc f(\xt)$ such that
\begin{equation}\label{BP3}
\|\xt^*\|\le \eps/\mu.
\end{equation}
Now, take $(\bx_\eps,\bx_\eps^*)\in \del f$ such that
\begin{subequations}
\begin{gather}
\|\bx_\eps-x_\eps\|<\mu\eps,~~|f(\xb_\eps)-f(x_\eps)|<\eps^2, \mbox{ and}  \label{clos1}\\
\la\bx_\eps^*-x_\eps^*,\bx-x_\eps\ra>-\eps,~~\la\bx_\eps^*,\bx_\eps-x_\eps\ra<\eps.
\label{clos2}
\end{gather}
\end{subequations}
It follows from the first parts of \eqref{BP1} and \eqref{clos1} that
\begin{equation}\label{cond1}
\|\bx_\eps-(\xb+\mu u)\|< 2\mu\eps,
\end{equation}
from the second parts of \eqref{BP1} and \eqref{clos1}
combined with \eqref{sci0} and \eqref{sci2} that 
\begin{equation}\label{cond2}
|f(\bx_\eps)-f(\xb)|\le 2\eps^2,
\end{equation}
and from \eqref{BP3} and \eqref{clos2} that
\begin{align}\label{cond3}
\la \bx_\eps^*, \bx-\bx_\eps\ra >-\eps(\eps+\|u\|+2),
\end{align}
since
\begin{align*}
\la \bx_\eps^*, \bx-\bx_\eps\ra 
            &= \la x_\eps^*, \bx-x_\eps\ra+
               \la \bx_\eps^*-x_\eps^*, \bx-x_\eps\ra + 
               \la \bx_\eps^*, x_\eps-\bx_\eps\ra\\
            &>  -\|\xt^*\|\|\xt-\xb\|-2\eps\\
            &> -(\eps/\mu)\mu\eps+\mu \|u\|-2\eps=-\eps(\eps+\|u\|+2).
\end{align*}
Therefore, if for every $n\in\N$, we let $\eps=1/n$
and choose $\mu_n=\mu$ satisfying \eqref{sci1},
so that $0<\mu_n<1/n$, 
we obtain a sequence $((x_n,x_n^*))_n$ in $\del f$
by setting $x_n:=\bx_\eps$ and $x_n^*:=\bx_\eps^*$.
It follows from \eqref{cond1}, \eqref{cond2} and \eqref{cond3}
that this sequence satisfies the requirements of the theorem.
\end{proof}

\begin{remark}
(a) The case $u=0$ in Theorem \ref{dense} is known, see, e.g.
\cite{JL13,JL14}.

\ps (b) The case $u\ne 0$ is new even for convex lsc functions (recall
that such functions are radially accessible at any point $\xb\in\dom f$
from any $u$ such that $\xb+u\in\dom f$).
\end{remark}

\smallbreak
(c) For $u\ne 0$, the conclusion of Theorem \ref{dense}
can be false at points where the function is not
radially accessible. Let $f:\R\to\xR$ given by
$$f(x):=
\left\{
\begin{array}{ll}
0 & \mbox{if } x=0 \mbox{ or }  x= 1/n, \mbox{ for }n=1,2,\ldots\\
+\infty & \mbox{otherwise}.
\end{array}
\right.
$$
Then, $f$ is lsc on $\R$ but not radially accessible at
any point $\xb=1/n$ from $u\ne 0$.
We observe that all the points $x\ne\xb$ close to $\xb$
are not in $\dom f$, hence $\del f(x)=\emptyset$. 
\smallbreak
(d) For $u\ne 0$, we cannot claim in the conclusion of Theorem \ref{dense}
to find a radially convergent sequence
$(x_n)$ instead of a directionally convergent one. Consider
the function $f:\R^2\to\xR$ given, for $x=(\xi_1,\xi_2)$,  by
$$f(x):=
\left\{
\begin{array}{ll}
-\sqrt{\xi_1}& \mbox{if } (\xi_1, \xi_2)\in \R_+\times \R \\
+\infty  & \mbox{otherwise}.
\end{array}
\right.
$$
Then, $f$ is convex lsc on $\R^2$ and $f(0,t)=0$ for every $t\in\R$, so $f$ is
radially continuous at $\xb=(0,0)$ in the direction $u=(0,1)$.
But, for every $t\in\R$ we have $\del f(0,t)=\emptyset$,
so there is no sequence $(x_n)$ radially convergent to $\xb$
from the direction $u=(0,1)$ with $\del f(x_n)\not=\emptyset$.
\medbreak
We recall the statement of the mean value inequality using the radial subderivative
\cite{JL13,JL14}:

\begin{lemma}[Basic mean value inequality]\label{mvi}
Let $X$ be a Hausdorff locally convex space, $f:X\to\xR$ be lsc,
$\xb\in X$ and $x\in\dom f$.
Then, for every real number $\lambda\le f(\xb)-f(x)$, there
exist $t_0\in [0,1[$ and $x_0:=x+t_0(\xb-x)\in [x,\xb[$
such that $f(x_0)\le f(x)+t_0\lambda$ and\smallskip\\
\centerline{ $\lambda\le f^r(x_0;\xb-x).$}
\end{lemma}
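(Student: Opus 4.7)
The plan is to reduce the claim to a one-dimensional problem by studying the perturbed function $g : [0,1] \to \xR$ defined by
\[
g(t) := f(x + t(\xb - x)) - t\lambda.
\]
This $g$ is lsc as the composition of the continuous curve $t \mapsto x + t(\xb - x)$ with the lsc $f$, corrected by a continuous linear term; it is proper because $g(0) = f(x) < +\infty$; and the hypothesis $\lambda \le f(\xb) - f(x)$ rewrites as $g(0) \le g(1)$.

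Since $g$ is lsc on the compact interval $[0,1]$, it attains its infimum $M$ at some $t_* \in [0,1]$. I take $t_0 := t_*$ when $t_* < 1$; when $t_* = 1$, the minimality of $g(1)$ combined with $g(0) \le g(1)$ forces $g(0) = g(1) = M$, so $0$ is also a global minimizer and I set $t_0 := 0$ instead. In both cases $t_0 \in [0,1[$, $g(t_0) = M \le g(0)$, and $g(s) \ge g(t_0)$ for every $s \in [t_0, 1]$.

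Finally, I translate these two inequalities back to $f$ at $x_0 := x + t_0(\xb - x)$. The bound $g(t_0) \le g(0)$ unwinds directly to $f(x_0) \le f(x) + t_0\lambda$, giving the first conclusion. For the second, for each small $s > 0$ the inequality $g(t_0 + s) \ge g(t_0)$ rearranges to
\[
\frac{f(x_0 + s(\xb - x)) - f(x_0)}{s} \ge \lambda,
\]
and passing to $\liminf$ as $s \searrow 0$ yields $\lambda \le f^r(x_0; \xb - x)$.

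There is no serious obstacle here; the one delicate point is the boundary case $t_* = 1$, where the strict requirement $t_0 \in [0,1[$ could fail unless one exploits $g(0) \le g(1) = M$ to retreat to $t_0 = 0$. The degenerate case $x = \xb$ is absorbed into this scheme as well, since $\lambda \le 0$ then makes $g$ non-decreasing and $t_0 = 0$ emerges automatically.
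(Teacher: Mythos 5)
Your proof is correct: the paper itself only recalls this lemma from \cite{JL13,JL14} without proof, and your argument --- minimizing the lsc function $t\mapsto f(x+t(\xb-x))-t\lambda$ on the compact interval $[0,1]$, retreating to $t_0=0$ when the minimum sits at the right endpoint (possible since $g(0)\le g(1)$), and reading off the two conclusions from $g(t_0)\le g(0)$ and $g(t_0+s)\ge g(t_0)$ --- is exactly the standard proof given in those references. The endpoint case and the degenerate case $x=\xb$ are handled properly, and $x_0\in\dom f$ follows from the first conclusion, so the difference quotient defining $f^r(x_0;\xb-x)$ is well defined.
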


\begin{proposition}[Radial stability of the upper radial subderivative]\label{devdir}
Let $X$ be a Hausdorff locally convex space, $f:X\to\xR$ be lsc,
$\xb\in\dom f$ and $u\in X$
such that $f$ is radially accessible at $\xb$ from $u$.
Then, there is a sequence
$\mu_n\searrow 0$ such that $f(\xb+\mu_n u)\to f(\xb)$ and
\begin{equation}\label{applimvi1}
f^r_+(\xb;u)\le \liminf_{n\to +\infty}f^r(\xb+\mu_n u;u).
\end{equation}
In particular,
\begin{equation}\label{below00}
f^r_+(\xb;u)\le \inf_{\alpha\ge 0}
\limsup_{x\to_u\xb} f^r(x;u+\alpha (\xb-x)).
\end{equation}
\end{proposition}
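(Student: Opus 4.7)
The plan is to handle separately the easy case $f^r_+(\xb;u)=-\infty$ (where any sequence supplied by radial accessibility works) and the interesting case $f^r_+(\xb;u)>-\infty$. In the latter, I would generate the desired sequence by combining three ingredients: the $\limsup$-definition of $f^r_+(\xb;u)$, radial accessibility of $f$ at $\xb$ from $u$, and the basic mean value inequality (Lemma \ref{mvi}), applied on a shrinking subsegment of the ray $\xb+\R_+u$.

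Concretely, I would fix real numbers $\gamma_n\nearrow f^r_+(\xb;u)$, then for each $n$ select $s_n\in(0,1/n)$ with $f(\xb+s_nu)-f(\xb)>\gamma_ns_n$ and $s_n|\gamma_n|<1/n$ (the second rate condition matters only when $f^r_+(\xb;u)=+\infty$), and $\tau_n\in(0,s_n/n)$ with $f(\xb+\tau_nu)\le f(\xb)+s_n/n$ (possible by radial accessibility). The key step is to apply Lemma \ref{mvi} with source $b_n:=\xb+\tau_nu$, target $a_n:=\xb+s_nu$, and slope $\lambda_n:=s_n(\gamma_n-1/n)\le f(a_n)-f(b_n)$. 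This yields $t_0^n\in[0,1[$ and a point $c_n=\xb+\mu_nu$ on $[b_n,a_n[$, with $\mu_n\in[\tau_n,s_n)$, such that $f(c_n)\le f(b_n)+t_0^n\lambda_n$ and $\lambda_n\le f^r(c_n;a_n-b_n)=(s_n-\tau_n)f^r(c_n;u)$. Dividing the second relation by $s_n-\tau_n$ and recalling $\tau_n/s_n<1/n$ forces $f^r(c_n;u)\to f^r_+(\xb;u)$; combining the first relation with the lsc of $f$ (and the rate conditions) forces $f(c_n)\to f(\xb)$; and $\mu_n<1/n$ gives $\mu_n\searrow 0$. This establishes \eqref{applimvi1}.

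For the consequence \eqref{below00}, I would plug $x_n:=\xb+\mu_nu$ into the right-hand side for an arbitrary $\alpha\ge 0$. Since $x_n\to_u\xb$ and $u+\alpha(\xb-x_n)=(1-\alpha\mu_n)u$ with $1-\alpha\mu_n\to 1$, positive homogeneity of $f^r(x_n;\cdot)$ in its direction argument (valid for large $n$, when $1-\alpha\mu_n>0$) gives $f^r(x_n;u+\alpha(\xb-x_n))=(1-\alpha\mu_n)f^r(x_n;u)$; passing to $\liminf$ and invoking \eqref{applimvi1} transfers the lower bound, so $\limsup_{x\to_u\xb}f^r(x;u+\alpha(\xb-x))\ge f^r_+(\xb;u)$. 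Taking the infimum over $\alpha\ge 0$ completes the argument.

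The main obstacle is the rate bookkeeping in the simultaneous choice of $(s_n)$, $(\tau_n)$, and $(\gamma_n)$: one needs $\tau_n/s_n\to 0$ to pull $f^r(c_n;u)$ up to $f^r_+(\xb;u)$, yet also $s_n|\gamma_n|\to 0$ to prevent the bound on $f(c_n)$ from blowing up when $f^r_+(\xb;u)=+\infty$. Beyond that, the positive homogeneity step in the second part silently uses the fact that $1-\alpha\mu_n$ is eventually positive, which is secured by $\mu_n\to 0$.
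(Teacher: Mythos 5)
Your proposal is correct and follows essentially the same route as the paper's proof: both combine the $\limsup$ definition of $f^r_+(\xb;u)$ with radial accessibility to set up the hypothesis of Lemma \ref{mvi} on a shrinking segment $[\xb+\tau_n u,\xb+s_n u]$ of the ray, extract the point $\xb+\mu_n u$ from the mean value inequality, and then transfer the bound to \eqref{below00} via positive homogeneity of $f^r(x_n;\cdot)$ using $u+\alpha(\xb-x_n)=(1-\alpha\mu_n)u$. The only (harmless) difference is bookkeeping: you diagonalize over $\gamma_n\nearrow f^r_+(\xb;u)$ up front with an explicit rate condition $s_n|\gamma_n|<1/n$, whereas the paper fixes a single $\lambda<f^r_+(\xb;u)$ at a time and leaves the diagonal extraction implicit.
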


\begin{proof}
If $f^r_+(\xb;u)=-\infty$, there is nothing to prove.
Otherwise, it suffices to show that for
every $\lambda< f^r_+(\xb;u)$, there exists a sequence
$\mu_n\searrow 0$ such that $f(\xb+\mu_n u)\to f(\xb)$ and
\begin{equation}\label{applimvi1b}
\ld\le \liminf_{n\to +\infty}f^r(\xb+\mu_n u;u).
\end{equation}
So, let $\ld< f^r_+(\xb;u)$. 
By definition of $f^r_+(\xb;u)$ there is a sequence $\tau_n\searrow 0$ such that
\begin{equation*}
\ld \tau_n < f(\xb+\tau_nu)-f(\xb)\quad\mbox{for every } n\in \N,
\end{equation*}
and by assumption, there is a sequence $t_n\searrow 0$ such that
$f(\xb+t_n u)\to f(\xb)$.
For every $n$, let $k_n\in \N$ such that
\begin{subequations}\label{dd1}
\begin{gather}
\ld \tau_n < f(\xb+\tau_nu)-f(\xb+ t_{k_n}u),\label{dd1a}\\
 0<t_{k_n}<\tau_n^2.\label{dd1b}
\end{gather}
\end{subequations}
Applying Lemma \ref{mvi} to \eqref{dd1a}
we obtain $t_0\in [0,1[$ and $x_0:=\xb+t_{k_n}u+t_0(\tau_n-t_{k_n})u=\xb+\mu_n u$
with $\mu_n:=t_{k_n}+t_0(\tau_n-t_{k_n})\in [t_{k_n},\tau_n[$ such that
\begin{subequations}\label{dd2}
\begin{gather}
f(\xb+\mu_n u)\le f(\xb+t_{k_n}u)+t_0\ld \tau_n, \label{dd2a}\\
\ld \tau_n/(\tau_n-t_{k_n})< f^r(\xb+\mu_n u;u). \label{dd2b}
\end{gather}
\end{subequations}
Observe that in view of \eqref{dd1b} one has $\tau_n/(\tau_n-t_{k_n})\to 1$,
so, letting $n\to +\infty$ we obtain
\begin{equation*}
\mu_n\searrow 0,\quad
\limsup_{n\to +\infty} f(\xb+\mu_n u) \le f(\xb),\quad
\ld \le  \liminf_{n\to +\infty}f^r(\xb+\mu_n u;u).
\end{equation*}
This completes the proof of the first statement since
we also have $$f(\xb)\le \liminf_{n\to +\infty} f(\xb+\mu_n u)$$
by the lower semicontinuity of $f$ at $\xb$.
\smallbreak
To show the second statement, let $x_n:=\xb+\mu_n u$
with $\mu_n\searrow 0$ such that 
\begin{equation}\label{z1}
f^r_+(\xb;u)\le \liminf_{n\to +\infty}f^r(x_n;u).
\end{equation}
Since $u+\alpha (\xb-x_n)=(1-\alpha\mu_n)u$, it follows that,
for any $\alpha\ge 0$,
\begin{equation}\label{z2}
\liminf_{n\to +\infty}f^r(x_n;u+\alpha (\xb-x_n))
=\liminf_{n\to +\infty}\,(1-\alpha\mu_n)f^r(x_n;u)=\liminf_{n\to +\infty}f^r(x_n;u).
\end{equation}
Since $x_n\to_u \xb$,
we derive from \eqref{z1} and \eqref{z2} that, for every $\alpha\ge 0$,
\begin{align*}
f^r_+(\xb;u)\le \liminf_{n\to +\infty}f^r(x_n;u+\alpha (\xb-x_n))\le
\limsup_{x\to_u\xb} f^r(x;u+\alpha (\xb-x)). \tag*{\qedhere}
\end{align*}
\end{proof}

Plugging the formula \eqref{formula0b} of Theorem \ref{formula}
into the inequality \eqref{below00} of Proposition \ref{devdir}, we
immediately obtain that
the upper radial subderivative $f^r_+(\xb;u)$ is a lower bound
for the directional limit superior of the support of any subdifferential:

\begin{theorem}[Link between upper radial subderivative and
subdifferentials]\label{belowap}
Let $X$ be a Banach space,
$f:X\to\xR$ be lsc, $\xb\in\dom f$ and $u\in X$
such that $f$ is radially accessible at $\xb$ from $u$.
Then,
\begin{equation}\label{below000}
f^r_+(\xb;u)\le\inf_{\alpha\ge 0}
\limsup_{x\to_u\xb}\,\delsf(x;u+\alpha (\xb-x)).
\end{equation}
\end{theorem}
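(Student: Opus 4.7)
The plan is to observe that this theorem is essentially a direct combination of Proposition \ref{devdir} and the duality formula of Theorem \ref{formula}, specialized to the direction $v=u$. The radial accessibility hypothesis enters only through Proposition \ref{devdir}, which is the step that supplies the lower bound on the upper radial subderivative.

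First I would invoke Proposition \ref{devdir}, whose assumption (radial accessibility of $f$ at $\xb$ from $u$) matches ours, to obtain
\begin{equation*}
f^r_+(\xb;u)\le \inf_{\alpha\ge 0}\limsup_{x\to_u\xb} f^r(x;u+\alpha (\xb-x)).
\end{equation*}
Second, I would apply Theorem \ref{formula} with the choice $v:=u$ (so that the directional limit superior in \eqref{formula0} is taken along $x\to_u\xb$). The equality \eqref{formula0a}=\eqref{formula0b} in that theorem gives, for every $\alpha\ge 0$,
\begin{equation*}
\limsup_{x\to_u\xb} f^r(x;u+\alpha (\xb-x)) = \limsup_{x\to_u\xb} \delsf(x;u+\alpha (\xb-x)).
\end{equation*}
Substituting this identity into the previous inequality and then taking the infimum over $\alpha\ge 0$ on the right-hand side yields \eqref{below000}.

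There is no real obstacle here: all the work has been done in the two preceding results. The only mild point to check is that the specialization $v=u$ in Theorem \ref{formula} is legitimate, but that theorem is stated for an arbitrary direction $v\in X$, so the choice $v=u$ is immediate. The radial accessibility hypothesis is used solely to invoke Proposition \ref{devdir}; Theorem \ref{formula} itself requires no such regularity.
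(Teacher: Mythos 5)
Your proposal is correct and is exactly the paper's argument: the author also obtains \eqref{below000} by plugging the equality \eqref{formula0b} of Theorem \ref{formula} (with $v=u$) into the inequality \eqref{below00} of Proposition \ref{devdir}. Nothing is missing.
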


\section{Appendix: a direct proof of Theorem \ref{belowap}}

For the sake of completeness, we provide a direct proof
Theorem \ref{belowap}. In fact, we shall establish an inequality more accurate
than \eqref{below000}, in the same vein as Theorem \ref{JL} and
Theorem \ref{dense}.

\begin{theorem}[Refined link between upper radial subderivative and
subdifferentials]\label{belowapbis}
Let $X$ be a Banach space,
$f:X\to\xR$ be lsc, $\xb\in\dom f$ and $u\in X$
such that $f$ is radially accessible at $\xb$ from $u$.
Then, there is a sequence $((x_n,x_n^*))\subset\del f$ such that
$x_n\to_u \xb$, $f(x_n)\to f(\xb)$
and
\begin{equation}\label{below0}
f^r_+(\xb;u)\le \liminf_{n}\,\langle x^*_n,u+\alpha(\xb-x_n)\rangle, \quad
\forall \alpha\ge 0.
\end{equation}
\end{theorem}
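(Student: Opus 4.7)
The plan is to combine Proposition~\ref{devdir} with Theorem~\ref{JL} via a diagonal extraction that produces a single sequence $((x_n,x_n^*))$ simultaneously valid for all $\alpha\ge 0$. Proposition~\ref{devdir} supplies $\mu_n\searrow 0$ such that, writing $z_n:=\xb+\mu_n u$, one has $f(z_n)\to f(\xb)$ and
$$f^r_+(\xb;u)\le \liminf_{n\to +\infty} f^r(z_n;u).$$
For each fixed $n$, Theorem~\ref{JL} applied at the base point $z_n$ in the direction $u$ produces a sequence $((y^n_k,(y^n_k)^*))_k\subset\del f$ with $y^n_k\to z_n$, $f(y^n_k)\to f(z_n)$, $f^r(z_n;u)\le\liminf_k\langle (y^n_k)^*,u\rangle$, and $\limsup_k\langle (y^n_k)^*,y^n_k-z_n\rangle\le 0$.

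A standard diagonal selection then furnishes indices $k_n$ so that $(x_n,x_n^*):=(y^n_{k_n},(y^n_{k_n})^*)$ satisfies
$$\|x_n-z_n\|\le \mu_n^2,\quad |f(x_n)-f(z_n)|\le 1/n,\quad \langle x_n^*,u\rangle\ge f^r(z_n;u)-1/n,$$
together with $\langle x_n^*,x_n-z_n\rangle\le 1/n$. The requirement $x_n\to_u\xb$ then follows by writing $x_n=\xb+\mu_n v_n$ with $v_n:=u+(x_n-z_n)/\mu_n$, since $\|x_n-z_n\|/\mu_n\le\mu_n\to 0$, and $f(x_n)\to f(\xb)$ follows by triangle inequality through $f(z_n)$.

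For the main inequality, fix $\alpha\ge 0$ and exploit the identity $u+\alpha(\xb-x_n)=(1-\alpha\mu_n)u+\alpha(z_n-x_n)$, which yields
$$\langle x_n^*,u+\alpha(\xb-x_n)\rangle=(1-\alpha\mu_n)\langle x_n^*,u\rangle+\alpha\langle x_n^*,z_n-x_n\rangle.$$
The second summand is bounded below by $-\alpha/n\to 0$. For the first, the factor $1-\alpha\mu_n$ is eventually positive and tends to $1$ from below, while $\langle x_n^*,u\rangle\ge f^r(z_n;u)-1/n$. Since multiplication by a positive scalar converging to $1$ from below preserves a liminf lower bound (a short case split according to whether the limit is finite or $+\infty$ suffices), I obtain $\liminf_n\langle x_n^*,u+\alpha(\xb-x_n)\rangle\ge \liminf_n f^r(z_n;u)\ge f^r_+(\xb;u)$, which is exactly \eqref{below0}.

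The main obstacle is the joint handling of the two points $z_n$ and $x_n$: the output sequence must be directionally convergent to $\xb$ from $u$, but the subgradient information supplied by Theorem~\ref{JL} is naturally anchored at the auxiliary points $z_n$ rather than at $\xb$ itself. This mismatch is absorbed precisely by the control condition $\limsup\langle x_n^*,x_n-z_n\rangle\le 0$ from Theorem~\ref{JL}, which converts the directional perturbation $\alpha(z_n-x_n)$ into a vanishing error. Crucially, the diagonal construction is performed without reference to $\alpha$, so the resulting estimate is uniform in $\alpha\ge 0$ as required.
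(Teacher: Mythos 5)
Your proof is correct, and it takes a genuinely different route from the paper's. The paper's appendix proof is a direct, self-contained construction: it applies Ekeland's variational principle to the penalized function $g+\alpha_n d^2_{K_n}$ on a tube around the segment $K_n=[\xb+\mu_n u,\xb+(\mu_n+t_n)u]$, invokes the Separation Principle, and then works rather hard to show $\la \zeta_n^*,u+\alpha(\xb-x_n)\ra\le 0$ for the subgradient $\zeta_n^*$ of the penalty term (via the projection $P_{K_n}x_n$). You instead treat Theorem~\ref{JL} as a black box, anchor it at the shifted points $z_n=\xb+\mu_n u$ supplied by Proposition~\ref{devdir}, and diagonalize; the identity $u+\alpha(\xb-x_n)=(1-\alpha\mu_n)u+\alpha(z_n-x_n)$ together with the control condition $\la x_n^*,x_n-z_n\ra\le 1/n$ then does all the work, uniformly in $\alpha$. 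Both proofs share Proposition~\ref{devdir} as the ingredient that converts $f^r_+(\xb;\cdot)$ into $f^r$ at nearby radial points; what yours buys is brevity and the observation that the refined theorem is a formal consequence of Theorem~\ref{JL} plus Proposition~\ref{devdir}, while the paper's version is ``direct'' in that it does not route through Theorem~\ref{JL} (which is only quoted from elsewhere) and so re-runs the Ekeland argument with the extra geometric refinement needed for directional convergence. One small repair: your selection criterion $\la x_n^*,u\ra\ge f^r(z_n;u)-1/n$ is vacuous when $f^r(z_n;u)=+\infty$; replace $f^r(z_n;u)$ there by $\min\{f^r(z_n;u),n\}$ (or fix $\gamma<f^r_+(\xb;u)$ and diagonalize over $\gamma_k\nearrow f^r_+(\xb;u)$ as the paper does in its fourth step), after which your liminf computation goes through unchanged in both the finite and infinite cases.
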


\begin{proof}
The pattern of the proof is similar to that of 
\cite[Theorem 2.1]{JL14}
but the argument has to be refined in order to obtain a directionally
convergent sequence $(x_n)$.
\ps
\textit{First step.}
If $u=0$ or if $f^r_+(\xb;u)=-\infty$, the result follows from
Theorem \ref{dense}.
Otherwise, assume $u\ne 0$, let $\gamma<f^r_+(\xb;u)$ and let $\eps>0$.
We claim that for each $n\in\N$ sufficiently large,
there exists $(x_n,x_n^*)\in \delc f$ such that
\begin{subequations}\label{undeux}
\begin{gather}
x_{n}\in D(\xb,u,\eps), \quad f(x_{n})< f(\xb)+\eps,
\label{un}\\
\langle x^*_n,u+\alpha(\xb-x_n)\rangle>\gamma-(\alpha+1)\eps, \quad
\forall \alpha\ge 0.\label{deux}
\end{gather}
\end{subequations}
Let $z^*\in X^*$ such that 
$\la z^*,u\ra =-\gamma$, set $g:=f+z^*$ and let $K:= [\xb, \xb+u]$.
Let also $0<\delta<1$ such that $g$ is bounded below on $B_\delta(K)$.
By Proposition \ref{devdir}, there exists a sequence $\mu_n\searrow 0$ such that
\begin{subequations}\label{etoile0}
\begin{gather}
|f(\xb+\mu_n u)- f(\xb)|<1/n, \label{etoile0a}\\
\gamma< f^r(\xb+\mu_n u;u).\label{etoile0b}
\end{gather}
\end{subequations}
We may assume $0<\mu_n< \sqrt{\delta}$.
By \eqref{etoile0b}, there exists $t_n\in {]}0,1-\mu_n]$ such that
\begin{equation}\label{etoile}
f(\xb+\mu_n u)\le f(\xb+\mu_n u+tu) -\gamma t, \quad \forall  t\in [0,t_n].
\end{equation}
Let $K_n:=[\xb+\mu_n u,\xb+(\mu_n +t_n)u]\subset K$. 
Then, (\ref{etoile}) can be rewritten as
\begin{equation}\label{etoile2}
g(\xb+\mu_n u) \le g(x), \quad \forall x \in K_n.
\end{equation}
Take $r>0$ such that 
\begin{equation}\label{aa}
g(\xb+\mu_n u)<\inf_{B_{r}(K_n)}g+\mu_n^3t_n,
\end{equation}
and, observing that both $\inf_{B_{r}(K_n)}g$ and $\inf_{B_{\delta}(K_n)}g$ are finite, choose $\alpha_n>0$ such that
\begin{equation*}\label{aaa}
\inf_{B_{r}(K_n)}g\le \inf_{B_{\delta}(K_n)}g+\alpha_n r^2.
\end{equation*}
Then
\begin{equation*}\label{aaaa}
\inf_{B_{r}(K_n)}g \le (g +\alpha_n d^2_{K_n})(x), \quad \forall x\in B_{\delta}(K_n),
\end{equation*}
and therefore, by \eqref{aa},
\begin{equation}\label{penal}
g(\xb+\mu_n u) \le (g +\alpha_n d^2_{K_n})(x) + \mu_n^3t_n,
\quad \forall x\in B_{\delta}(K_n).
\end{equation}
Now, apply Ekeland's variational principle to the function
$g +\alpha_n d^2_{K_n}$ on the set $B_\delta(K_n)$
at point $\xb+\mu_n u\in K_n$ with $\eps=\mu_n^3t_n$ and
$\lambda = \mu_n^2t_n$.
Observe that the ball $B_\lambda(\xb+\mu_n u)$ is contained in
$B_{\delta}(K_n)$ since for every $x\in B_\lambda(\xb+\mu_n u)$, we have
$d_{K_n}(x)\le \|x-(\xb+\mu_n u)\|\le \lambda= \mu_n^2t_n<\delta$.
We then obtain a point $x_n\in X$ satisfying
\begin{subequations}\label{reBP}
\begin{gather}
\|x_n-(\xb+\mu_n u)\|< \mu_n^2t_n,~~ g(x_n)+\alpha_n d^2_{K_n}(x_n)\le g(\xb+\mu_n u)
\label{reBP1}\\
y\mapsto f(y)+\la z^*,y\ra+\alpha_n d^2_{K_n}(y)+\mu_n\|y-x_n\| \mbox{ admits a local minimum at } x_n.
\label{reBP2}
\end{gather}
\end{subequations}
It follows from the first half of \eqref{reBP1} that
\begin{equation*}
x_n-\xb\in B(\mu_n u,\mu_n^2t_n)=\mu_n B(u,\mu_nt_n),
\end{equation*}
showing that $x_{n}\in D(\xb,u,\eps)$ for $n$ sufficiently large.
On the other hand, the second half of \eqref{reBP1} and \eqref{etoile0a}
entail
\begin{equation*}
f(x_n)\le f(\xb+\mu_n u)+\|z^*\| \|\xb+\mu_n u-x_n\|
\le f(\xb)+\|z^*\| \mu_n^2t_n+1/n,
\end{equation*}
showing that $f(x_n)< f(\xb)+\eps$  for $n$ sufficiently large.
\ps
In view of (\ref{reBP2}), we may apply the Separation Principle at point $x_n$
with the convex Lipschitz function
$\varphi:y\mapsto \la z^*,y\ra+\alpha_n d^2_{K_n}(y)+\mu_n\|y-x_n\|$
to obtain points
$x_n^*\in \delc f(x_n)$, $\zeta_n^*\in\del d^2_{K_n}(x_n)$ and $\beta_n^*\in B^*$  with
\begin{equation}\label{recond}
0=x_n^*+z^*+\alpha_n \zeta_n^*+\mu_n \beta_n^*.
\end{equation}
We claim that the pair $(x_n,x_n^*)\in \delc f$
satisfies \eqref{deux} for large $n\in\N$.
Assume it can be shown that
for all $\alpha\ge 0$ and for large $n\in\N$,
\begin{equation}\label{ff0}
\langle \zeta^*_n,u+\alpha(\xb-x_n)\rangle\le 0.
\end{equation}
Then, it follows that for all $\alpha\ge 0$ and for large $n\in\N$,
\begin{align*}
\la x_n^*,u+\alpha(\xb-x_n)\ra &=\la-z^*,u+\alpha(\xb-x_n)\ra
-\alpha_n\la\zeta_n^*,u+\alpha(\xb-x_n)\ra-2\mu_n\la \beta_n^*,u+
\alpha(\xb-x_n)\ra \\
&\ge\gamma-\alpha\|z^*\|\|\xb-x_n\|-2\mu_n\|u+\alpha(\xb-x_n)\|,
\end{align*}
which implies that
$\langle x^*_n,u+\alpha(\xb-x_n)\rangle>\gamma-(\alpha+1)\eps$
for $n$ sufficiently large, as claimed.
\ps
\textit{Second step.}
To complete the proof of \eqref{deux} it remains to prove \eqref{ff0}.
We first consider the case $\alpha=0$, that is, we show
\begin{equation}\label{claim2}
\la \zeta_n^*, u\ra \le 0, \quad
\forall n\in\N.
\end{equation}
Let $P_{K_n}{x_n}\in K_n$ be any point such that $\|x_n -P_{K_n}{x_n}\|= d_{K_n}(x_n)$.
We have
$$\|\xb + \mu_nu-P_{K_n}{x_n}\|\le \|\xb + \mu_nu-x_n\|+\|x_n-P_{K_n}{x_n}\|<2\mu_n^2t_n.$$ 
So, $P_{K_n}{x_n}=\xb + \mu_nu+\tau_nu$ with $\tau_n\|u\|<2\mu_n^2t_n$.
Hence, $t_n-\tau_n>0$ for large $n$, and
$$
(t_n-\tau_n)u=\xb + (\mu_n+t_n)u-P_{K_n}{x_n}.
$$
Notice that $\zeta_n^*=2 d_{K_n}(x_n) \xi_n^*$ where $\xi_n^*\in\del d_{K_n}(x_n)$.
Then:
\begin{align*}
(t_n-\tau_n)\la\zeta_n^*,u\ra &=\la \zeta_n^*,\xb+(\mu_n+t_n) u-P_{K_n}{x_n}\ra\\
&=\la \zeta_n^*,\xb+(\mu_n+t_n)u-x_n\ra+ \la\zeta_n^*,x_n-P_{K_n}{x_n}\ra\\
&=  2 d_{K_n}(x_n) \left(\la\xi_n^*,\xb+(\mu_n+t_n)u-x_n\ra+ \la\xi_n^*,x_n-P_{K_n}{x_n}\ra\right)\\
&\le  2 d_{K_n}(x_n) (- d_{K_n}(x_n) + \|x_n -P_{K_n}{x_n}\|)=0.
\end{align*}
This proves \eqref{claim2}.

Now consider the case $\alpha> 0$. Write $\alpha=1/t$. 
We must show that for large $n\in\N$,
\begin{equation}\label{claim1}
\langle \zeta^*_n,u+\alpha(\xb-x_n)\rangle=(1/t)\la \zeta_n^*, \xb+t u-x_n\ra \le 0.
\end{equation}
But, for $n$ so large that $\mu_n<t$, we have
\begin{align*}
\la \zeta_n^*, \xb+t u-x_n\ra &= 
\la \zeta_n^*, \xb+\mu_n u-x_n\ra +\la \zeta_n^*, (t-\mu_n) u\ra\\
&\le d^2_{K_n}(\xb+\mu_n u)-d^2_{K_n}(x_n)+(t-\mu_n)\la \zeta_n^*, u\ra\\
&\le -d^2_{K_n}(x_n)\quad \mbox{by } \eqref{claim2}\\
&\le 0.
\end{align*}
This proves \eqref{claim1}. Hence, \eqref{ff0} holds and so also
\eqref{deux}, as we have observed.
\ps
\textit{Third step.}
Every pair $(x_n,x_n^*)$ in $\delc f$ is close to a
pair $(\xb_n,\xb_n^*)$ in $\del f$ in such a way that
the sequence $((x_n,x_n^*))_n$ satisfying \eqref{un}--\eqref{deux}
for large $n\in\N$ can actually be assumed to lie in $\del f$
(proceed as in Theorem \ref{dense}).

\smallbreak
\textit{Fourth step.}
The theorem is derived from \eqref{un}--\eqref{deux} as follows.
Let $(\gamma_k)_k$ be an increasing sequence
of real numbers such that $\gamma_k\nearrow f^r(\xb;d)$.
We have proved that, for each $k\in\N$, there
are a sequence $((x_{n,k},x_{n,k}^*))_n
\subset\del f$ and an integer $N_k\in\N$ satisfying for every $n\ge N_k$:
\begin{subequations}
\begin{gather}
x_{n,k}\in D(\xb,u,1/k), \quad f(x_{n,k})< f(\xb)+ 1/k,
\label{unfin}\\
\langle x^*_{n,k},u+\alpha(\xb-x_{n,k})\rangle
>\gamma_k-(\alpha+1)/k, \quad
\forall \alpha\ge 0.\label{deuxfin}
\end{gather}
\end{subequations}
Clearly, we may assume $N_{k+1}>N_k$. Then,
it is immediate from \eqref{unfin}--\eqref{deuxfin}
that the diagonal sequence defined, for $k\in\N$,
by $(x_k,x_k^*) := (x_{N_k,k},x_{N_k,k}^*)$ satisfies
the assertions of the theorem.
\end{proof}

{\small

}
\end{document}